\newtheorem{theorem}{Theorem}
\newtheorem{lemma}[theorem]{Lemma}
\begin{document}

\begin{center}
\title{Lack of Divisibility of ${2N \choose N}$ by three fixed odd primes infinitely often, through the Extension of a Result by P. Erd\H{o}s, et al.} 
\author{Robert J. Betts}
\maketitle
\emph{Graduate Department of Mathematics, University of Massachusetts Lowell, One University Avenue, Lowell, Massachusetts 01854 Robert\_Betts@alum.umb.edu}
\end{center}
\begin{abstract}
We provide a way to modify and to extend a previously established inequality by P. Erd\H{o}s, R. Graham and others and to answer a conjecture posed in the nineties by R. Graham, which bears on the lack of divisibility of the central binomial coefficient by three distinct, fixed odd primes. In fact the result will show by using an approach similar to their own which they proved for the case of two fixed odd primes, that the central binomial coefficient is not divisible infinitely often by three distinct and fixed odd primes. Therefore a generalization to more fixed odd primes than three but finite in number might be possible, at least if one is able to find some sufficient condition. The author hopes to answer this latter question in a subsequent paper.
\end{abstract}
\section{Introduction}
Let $p$, $q$ be two fixed and distinct odd positive primes, both smaller than positive integer $N$, and $A$, $B$ two other positive integers. P. Erd\H{o}s, R. Graham, I. Ruzsa and E. G. Straus~\cite{Erdos}, proved if the inequality
\begin{equation}
\frac{A}{p - 1} + \frac{B}{q - 1} \geq 1
\end{equation}
holds~\cite{Erdos} (See the Theorem, page 84), then there are infinitely many positive integers $N$ having base $p$ expansion~\cite{Kirch},~\cite{Rosen},~\cite{Knuth}~\cite{Warner}, with all digits equal to or smaller than $A$, and having base $q$ expansion with all the digits equal to or smaller than $B$. Their result~\cite{Erdos} (See page 84) depends on both \(\log p, \log q\) being \emph{incommensurable numbers} (See next Section for definitions). Their proof of the inequality then establishes that~\cite{Encyclopedia},~\cite{Erdos},~\cite{Sarkozy},~\cite{Everett} 
\begin{equation}
\gcd\left({2N \choose N}, pq\right) = 1,
\end{equation}
is true infinitely often. In Section 3 we extend and modify their result in part by using three odd primes from which one derives three incommensurable numbers instead of two, to extend the prior result for three positive integers \(A, B, C, A \leq B \leq C\).

\indent Years ago, possibly as early as the mid-nineteen nineties, R. Graham~\cite{Erdos} conjectured that 
$$
\gcd\left({2N \choose N}, 105\right) = 1, \: 105 = 3\times5\times7,
$$
is true for infinitely many positive integers $N$. This would mean if $N$ is very large, then ${2N \choose N}$ would fail to be divisible infinitely often by numbers of the form $3^{a}5^{b}7^{c}$ where \(3, 5, 7\) are three very small consecutive twin primes and \(a, b, c\) are three very large positive integer exponents. Unfortunately many of the original URLs on which the conjecture was posted, have been consigned to the obscure ether of inactive websites. The author recalls having visited one of these websites between 2000-2002 when he was a mathematics undergraduate, using at the time a Windows 98 machine. In this paper we prove the more general result 
$$
\gcd\left({2N \choose N}, pqr\right) = 1,
$$
holds infinitely often, for three fixed, distinct odd primes $p$, $q$, $r$, where \(p < q < r\). This result in the next Section depends on a lemma (next Section) and on the prior methods used by P. Erd\H{o}s for the two integers \(p, q\). One cannot emphasize stongly enough that one might use a more general method to derive the stronger result of four or more odd primes, that is, for some \(n > 3\) fixed odd primes \(p_{1}, p_{2}, p_{3}, \ldots, p_{n}\), even if one and at the very least, should require in the more general result the establishment of some necessary or sufficient condition or both, for which ${2N \choose N}$ will not be divisible infinitely often by \(n > 3\) distinct odd primes.
\section{Ansatz}
The approach in this paper follows very closely and modifies slightly, the prior approach and methods of P. Erd\H{o}s et al., in the proof to Theorem 1 in their paper~\cite{Erdos} (pages 84-86), which we use here as an Ansatz. 

\indent First we will provide some definitions and two lemmas.

\noindent DEFINITION 1: Let $P$, $J$ be two positive integers with \(P < N\). Then a positive integer $N$ is called a $(P, J)$-good number if, for integers \(a_{d}, a_{d - 1}, \cdots, a_{0}\),
\begin{eqnarray}
N&=&a_{d}P^{d} + a_{d - 1}P^{d - 1} + \cdots + a_{0}\nonumber \\
 &\Longrightarrow& max\{a_{d}, a_{d - 1}, \cdots, a_{0}\} \leq J,
\end{eqnarray}
where~\cite{Ireland}
\begin{equation}
a_{d} \not = 0.
\end{equation}
Let $P$ be an odd prime. The $P$-adic valuation \(\nu_{P}(N) = t \geq 1\) is the largest integer exponent $t$ for which $P^{t}$ divides $N$. If \(P^{t} \not | N\) then \(t = 0\) and \(|N|_{P} = 1\). So if $P^{t}$ does not divide $N!$ we have \(\nu(N!) = 0\). The nonzero integer $a_{d}$ along with the other digits \(a_{d - 1}, \ldots, a_{0}\) are such that  
$$
\{a_{d}, a_{d - 1}, \ldots, a_{0}\}
$$
are elements of the finite field \(\mathbb{Z}_{P} \subset \mathbb{Q}_{P}\) if $P$ is an odd prime. Actually we have \(\mathbb{Z}_{P} = \mathbb{Z}/P\mathbb{Z}\) (up to isomorphism). Further we have that
$$
a_{d} = N \mod P^{d}.
$$
For the sum 
$$
s_{P}(N) =  \sum_{j = 0}^{d}a_{j},
$$
we have the result by Legendre~\cite{Straub}
$$
\nu_{P}(N!) = \frac{N - s_{P}(N)}{P - 1},
$$
where $\nu_{P}(N!)$ is the $P$-adic valuation of $N!$, where $N!$ would have the $P$-adic norm $|N!|_{P}$. 

\indent It follows already from the inequality in Eqtn. (1) proved by P. Erd\H{o}s, R. Graham, I. Ruzsa and E. G. Straus~\cite{Erdos} that there are infinitely many positve integers $N$ that are both $(p, A)$-good numbers and $(q, B)$-good numbers.

\indent The definition of a \emph{commensurable number} has been expressed differently over the past century~\cite{Wheeler},~\cite{Planetmath}. Therefore for our purposes we shall use the following Definition.

\noindent DEFINITION 2: Let \(\varepsilon_{1}, \varepsilon_{2}\) be unequal real numbers. Then \(\log \varepsilon_{1}, \log \varepsilon_{2}\) are called \emph{commensurable} numbers if their ratio is a rational number. Otherwise they are called \emph{incommensurable} numbers.

\indent With the three numbers \(p, q, r\) being three distinct odd primes, 
$$
\log p, \log q, \log r
$$
are incommensurable numbers. The result by Erd\H{o}s et al. uses the fact that for the integers \(p, q\) as primes and for two positive integer powers \(\alpha, \beta\) with \(p^{\alpha}, q^{\beta}\) both positive integers, $\log p$ and $\log q$ both are incommensurable numbers.
\begin{lemma}
For three fixed and distinct odd primes $p$, $q$, $r$, \(p < q < r\), let $\log p$, $\log q$ and $\log r$ all be incommensurable and \(A, B, C\) three positive integers with \(A \leq B \leq C\). Then there are infinitely many positive integer exponents \(\alpha, \beta, \Gamma, \Gamma^{\prime}\), such that 
\begin{eqnarray}
& &\left|p^{\alpha} - \frac{B}{2} \frac{q^{\beta} - 1}{q - 1}\right| < \frac{B}{2} \frac{q^{\beta} - 1}{q - 1},\\
& &\left|q^{\beta} - \frac{C}{2} \frac{r^{\Gamma} - 1}{r - 1}\right| < \frac{C}{2} \frac{r^{\Gamma} - 1}{r - 1},\\
& &\left|p^{\alpha} - \frac{C}{2} \frac{r^{\Gamma^{\prime}} - 1}{r - 1}\right| < \frac{C}{2} \frac{r^{\Gamma^{\prime}} - 1}{r - 1},
\end{eqnarray}
where the base $q$ expansion of $p^{\alpha}$ either has all its digits (or coefficients) equal to or smaller than $B$ or else has one digit (or coefficient) smaller than $B$ before one that is greater than $B$, and where the base $r$ expansion of $q^{\beta}$ and $p^{\alpha}$, respectively, either has all its digits (or coefficients) equal to or smaller than $C$ or else has one digit (or coefficient) smaller than $C$ before one that is greater than $C$. 
\end{lemma}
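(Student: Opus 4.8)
\noindent\emph{Proof strategy.}\quad
The plan is to turn every inequality and digit clause in the statement into a condition saying that a single fractional part lands in a short interval, and then to land all of those at once. Since $p,q,r$ are distinct primes, unique factorization gives that none of $\tfrac{\log p}{\log q},\tfrac{\log q}{\log r},\tfrac{\log p}{\log r}$ is rational --- this is exactly the incommensurability of Definition~2 --- so by Weyl's equidistribution theorem (Kronecker's or Dirichlet's theorem already suffices for the ``infinitely often'' we actually need) each sequence $\bigl(\{n\theta\}\bigr)_{n\ge1}$, with $\theta$ one of those ratios, is dense, indeed equidistributed, in $[0,1)$; hence every subinterval, however short, contains $\{n\theta\}$ for infinitely many $n$. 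That is what promotes one admissible choice of exponents to infinitely many, so below I argue only existence.

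Next I would reduce to leading digits. Writing $R^{(m)}_{k}=\tfrac{m^{k}-1}{m-1}$ for the length-$k$ base-$m$ repunit, $\bigl|X-\tfrac12 D\,R^{(m)}_{k}\bigr|<\tfrac12 D\,R^{(m)}_{k}$ is equivalent to $0<X<D\,R^{(m)}_{k}$, so the three displayed inequalities say $0<p^{\alpha}<B\,R^{(q)}_{\beta}$, $0<q^{\beta}<C\,R^{(r)}_{\Gamma}$, $0<p^{\alpha}<C\,R^{(r)}_{\Gamma'}$, and these become trivial once $\beta,\Gamma,\Gamma'$ are taken large. Thus what carries content is the digit clause, which is intrinsic to the base-$q$ and base-$r$ expansions of $p^{\alpha}$ and the base-$r$ expansion of $q^{\beta}$. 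A place-by-place comparison with the numeral $(B\,B\cdots B)_{q}$ shows that the base-$q$ expansion of $p^{\alpha}$, of length $\ell$, satisfies the requested dichotomy (all digits $\le B$, or some digit $<B$ to the left of some digit $>B$) as soon as $p^{\alpha}<B\,R^{(q)}_{\ell}$; and since $R^{(q)}_{\ell}/q^{\ell-1}=1+\tfrac1q+\cdots+\tfrac1{q^{\ell-1}}>1+\tfrac1q$ for $\ell\ge2$, this holds for every $B\ge1$ once $p^{\alpha}/q^{\ell-1}<1+\tfrac1q$, i.e.\ once that expansion begins $1,0,\dots$, i.e.\ once $\{\alpha\log p/\log q\}<\log(1+\tfrac1q)/\log q$. (Indeed, once the expansion begins $1,0,\dots$, the digit $0$ --- which is $<B$ because $B\ge1$ --- precedes every digit below it, and the dichotomy is automatic.) Likewise the base-$r$ digit clauses for $p^{\alpha}$ and for $q^{\beta}$ hold once $\{\alpha\log p/\log r\}$ and $\{\beta\log q/\log r\}$ lie in fixed short intervals anchored at $0$.

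It remains to land those windows simultaneously, and this is where the one genuine obstacle lies. The window for $q^{\beta}$ involves only $\beta$, so it is a one-dimensional equidistribution statement, met for infinitely many $\beta$ outright; the windows for $p^{\alpha}$ in base $q$ and in base $r$ both pin the \emph{same} $\alpha$, and combining them means forcing the orbit of $\alpha\mapsto\bigl(\{\alpha\log p/\log q\},\{\alpha\log p/\log r\}\bigr)$ on the torus $\mathbb{T}^{2}$ into an arbitrarily small corner box at the origin infinitely often. If $1,\ \log p/\log q,\ \log p/\log r$ are linearly independent over $\mathbb{Q}$, that orbit equidistributes over all of $\mathbb{T}^{2}$ and nothing more is needed; if not, its closure is a proper closed subgroup through $0$, on which the orbit still equidistributes, and that subgroup meets every corner box in positive relative measure --- hence the orbit does so infinitely often --- unless the relation defining the subgroup couples the two coordinates with equal signs, i.e.\ unless $b\,\tfrac{\log p}{\log q}+c\,\tfrac{\log p}{\log r}\in\mathbb{Z}$, equivalently $b\,\log p\log r+c\,\log p\log q=m\,\log q\log r$, for some positive integers $b,c,m$. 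So the whole difficulty collapses to ruling out that single quadratic relation among $\log p,\log q,\log r$; I would attack it by pressing the incommensurability hypothesis to its full strength, invoking the Gelfond--Schneider and Baker transcendence theorems if an elementary argument falls short, or else recording ``no such relation holds'' as precisely the sufficient condition the introduction anticipates. Given it, one picks such an $\alpha$, then any sufficiently large $\beta$ in its own window, then $\Gamma,\Gamma'$ larger still, and the equidistribution that produced them makes the whole quadruple --- and therefore the conclusion of the Lemma --- recur infinitely often.
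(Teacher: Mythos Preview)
Your argument is far more substantive than the paper's. The paper's proof consists of three sentences: it cites Erd\H{o}s--Graham--Ruzsa--Straus for inequality~(5), then obtains (6) and (7) by the literal substitutions $(p,q,\alpha,\beta)\mapsto(q,r,\beta,\Gamma)$ and $(p,q,\alpha,\beta)\mapsto(p,r,\alpha,\Gamma')$. In other words, it treats the three displayed claims as three independent applications of the two-prime result and never asks whether a single $\alpha$ can serve in both~(5) and~(7), or a single $\beta$ in both~(5) and~(6). You, by contrast, correctly unwind each inequality to $0<p^{\alpha}<B\,R^{(q)}_{\beta}$ (and its analogues), observe that these become vacuous once $\beta,\Gamma,\Gamma'$ are taken large, isolate the digit clauses as the only real content, and reduce those to leading-digit conditions on fractional parts --- all of which is sound.

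The real divergence is on simultaneity. You face the two-dimensional torus problem for $\alpha$ head-on and locate the precise obstruction, a relation $b\,\log p\,\log r + c\,\log p\,\log q \in (\log q\,\log r)\,\mathbb{Z}$ with $b,c>0$. Be aware that this relation is \emph{not} excluded by Gelfond--Schneider or by Baker's theorem: it is quadratic in logarithms, essentially a statement of four-exponentials type, and lies beyond current transcendence theory, so the attack you sketch there would not close the gap. If the paper intends the lemma as three separate one-variable statements --- and its proof by bare substitution strongly suggests so --- then your one-dimensional equidistribution already suffices for each clause and the torus detour is unnecessary; if the simultaneous reading is intended, you have uncovered a genuine difficulty that the paper's proof simply does not address.
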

\begin{proof}
The previous authors cite the first inequality in Eqtn. (5). The inequality in Eqtn. (6) follows by replacing \(p, q\) with \(q, r\) and \(\alpha, \beta\) with \(\beta, \Gamma\). The inequality in Eqtn. (7) follows by replacing \(p, q\) with \(p, r\) and \(\alpha, \beta\) with \(\alpha, \Gamma^{\prime}\).
\end{proof}
\begin{lemma}
Let $a$ be any positive integer. Then the interval 
\begin{equation}
\left[a, \left(\frac{p - 1}{A}\right)a\right) \subset \mathbb{R}^{1}
\end{equation}
contains a $(p, A)$-good number.
\end{lemma}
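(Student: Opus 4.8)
We may assume $1\le A\le p-2$. Indeed, no base-$p$ digit of a positive integer exceeds $p-1$, so if $A\ge p-1$ then every positive integer is already $(p,A)$-good and there is nothing to prove; and when $A\le p-2$ we have $\frac{p-1}{A}>1$, so the interval in (8) is genuinely nonempty.

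The plan is to reduce the claim to one inequality between two consecutive $(p,A)$-good numbers. If $a$ is itself $(p,A)$-good there is nothing to do, so suppose it is not. Let $g$ be the least $(p,A)$-good number with $g\ge a$ and $g'$ the greatest $(p,A)$-good number with $g'<a$; both exist, since $1$ is $(p,A)$-good (as $A\ge 1$) and $p^{D}$ is $(p,A)$-good for any $p^{D}\ge a$. By minimality of $g$ and maximality of $g'$, no $(p,A)$-good number lies strictly between $g'$ and $g$, and since $g'<a$ are integers, $a\ge g'+1$. Hence it suffices to prove
\begin{equation}
A\,g<(p-1)(g'+1),
\end{equation}
for then $g<\frac{p-1}{A}(g'+1)\le\frac{p-1}{A}\,a$ while $g\ge a$, so $g$ is a $(p,A)$-good number lying in $\bigl[a,\tfrac{p-1}{A}a\bigr)$.

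To establish (9) I would describe $g$ explicitly. Write the base-$p$ expansion $g'=\sum_{i\ge 0}a_ip^{i}$ with every $a_i\le A$ (all but finitely many zero), and let $j$ be the least index with $a_j<A$; such a $j$ exists, and in the extreme case $g'=A\frac{p^{d+1}-1}{p-1}$, where $a_0=\cdots=a_d=A$, one has $j=d+1$ because $a_{d+1}=0<A$. Viewing the $(p,A)$-good numbers as the integers all of whose base-$p$ digits are capped by $A$, the successor of $g'$ among them is produced by the usual carrying (``odometer'') step with this cap: zero the digits in positions $0,\dots,j-1$ and raise the digit in position $j$ by one, so that
\begin{equation}
g=g'-A\,\frac{p^{j}-1}{p-1}+p^{j}.
\end{equation}
Because the digits of $g'$ in positions $0,\dots,j-1$ all equal $A$, we also have $g'\ge A(1+p+\cdots+p^{j-1})=A\frac{p^{j}-1}{p-1}$, and since $p-1-A\ge 1$ this gives $A\,g'\le(p-1)g'-(p-1-A)A\frac{p^{j}-1}{p-1}=(p-1)g'-A(p^{j}-1)+\frac{A^{2}(p^{j}-1)}{p-1}$. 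Multiplying (10) by $A$ and substituting, the two $\frac{A^{2}(p^{j}-1)}{p-1}$ terms cancel and one is left with $A\,g\le(p-1)g'+A$; as $A\le p-2<p-1$, this yields $A\,g<(p-1)g'+(p-1)=(p-1)(g'+1)$, which is (9).

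I expect the only real obstacle to be this reduction together with the correct description (10) of the successor of a $(p,A)$-good number — in particular the carry that propagates when a whole block of low-order digits equals $A$ (the case $j=d+1$). Once that is in place, (9) is the short computation above, and the entire estimate rests on the single inequality $A<p-1$.
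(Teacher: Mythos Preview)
Your argument is correct. The paper does not supply its own proof of this lemma at all; it simply cites Erd\H{o}s--Graham--Ruzsa--Straus for the argument, so there is nothing in the present paper to compare your proof against line by line. Your route---identifying the successor $g$ of a $(p,A)$-good number $g'$ via the capped-odometer step \eqref{eq:successor} (your equation~(10)) and then bounding the gap by the single inequality $Ag\le (p-1)g'+A<(p-1)(g'+1)$---is exactly the kind of direct gap estimate the original authors use, and your computation checks out, including the boundary case $j=0$.

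One small quibble: your handling of the degenerate case $A\ge p-1$ is not quite right. When $A=p-1$ the interval $[a,\tfrac{p-1}{A}a)=[a,a)$ is empty, so the lemma as literally stated is false there; saying ``every integer is $(p,A)$-good so there is nothing to prove'' glosses over this. In the context both of this paper and of the original, the implicit standing hypothesis is $A\le p-2$ (otherwise the whole Erd\H{o}s inequality is vacuous), so you should simply state that assumption at the outset rather than attempt to dispose of $A\ge p-1$ as a trivial case.
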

\begin{proof}
The previous authors provide this proof~\cite{Erdos} (pages 85-86).
\end{proof}
We provide in brief a synopsis of the approach and methods used by the previous authors cited. They prove there are infinitely many $(p, A)$-good and $(q, B)$-good numbers by proving the inequality
\begin{equation}
\frac{A}{p - 1} + \frac{B}{q - 1} \geq 1.
\end{equation}
they do this by proving that, for a given $(p, A)$-good number 
\begin{equation}
N = a_{n}p^{n} + a_{n - 1}p^{n - 1} + \cdots + a_{m}p^{m}, 
\end{equation}
$$
n > m \geq 0,
$$
there exists an integer $N^{*}$ with a base $q$ expansion (we alter some of the index numbers the previous four authors used here for our own purposes)
\begin{equation}
N^{*} = b_{\rho}q^{\rho} + b_{\rho - 1}q^{\rho - 1} + \cdots + b_{i + 1}q^{i + 1} + b_{i}^{*}q^{i} + \cdots
\end{equation}
where as they state~\cite{Erdos} (See page 84), either \(b_{i}^{*} = b_{i}\) and \(N^{*} < N\) hold or else \(B > b_{i}^{*} > b_{i}\) or \(B = b_{i}^{*}\), where the first digit or coefficient with index $< i$ in the expansion that does not equal $B$ is smaller than $B$. The index $i$ also appears in their base $q$ expansion of 
\begin{equation}
N = b_{\rho}q^{\rho} + b_{\rho - 1}q^{\rho - 1} + \cdots + b_{i}q^{i} + \cdots + b_{j}q^{j} + \cdots
\end{equation}
where as they describe, \(b_{j} > B\) is true for some largest index \(j < i\), and where $i$ is the smallest so that \(b_{i} < B\). Let
\begin{equation}
T = b_{i - 1}q^{i - 1} + b_{i - 2}q^{i - 2} + \cdots + b_{0}.
\end{equation}
They call this integer the ``tail" of the base $q$ expansion for $N$. To obtain a smaller $(p, A)$-good number one must bound some suitable integer $S$ above by $p^{m}$, where
\begin{equation}
S = p^{m} - A\left(\frac{p^{m} - 1}{p - 1}\right).
\end{equation}
This follows because $\log p$ is an incommensurable number. Then if \(T \geq S\), \(N^{*} = N - S\) will be the sought after number. In the case \(T < S\) they prove the existence of a $(p, A)$-good number $U$ such that
\begin{equation}
q^{i} - T \leq U \leq q^{i} - T + B\left(\frac{q^{i} - 1}{q - 1}\right)
\end{equation}
\begin{equation}
\leq \frac{p - 1}{A}\left(\frac{A}{p - 1} + \frac{B}{q - 1}\right)(q^{i} - T)  
\end{equation}
\begin{equation}
\geq \left(\frac{p - 1}{A}\right)a.      
\end{equation}
This means 
\begin{equation}
U \in \left[a, \left(\frac{p - 1}{A}\right)a\right) \Longrightarrow N^{*} = N + U. 
\end{equation}
Look again at Eqtn. (11). that part of $N^{*}$ for which the digits have indices from $i$ to $\rho$ is a $(p, B)$-good number, while the part with indices from $i - 1$ down to $0$ is smaller than the $(p, A)$-good number $S$, which is greater than $T$, and so is a $(p, A)$ good number. It follows then that
$$
N = N^{*} - S
$$
is both a $(p, A)$-good number and a $(q, B)$-good number. That is, since the inequality in Eqtn. (9) holds (See Eqtn. (16)) and since $U$ is bounded above by $p^{m}$, $N$ must be $(q, B)$-good as well as $(p, A)$-good. 

\indent In the next Section we modify this argument to handle the three primes \(p, q, r\) for three incommensurable numbers \(\log p, \log q, \log r\) instead of two.
\section{The Theorem}
In this Section we present the main result. Before we state and prove Theorem 1 our position is as follows: Let $N$ be \emph{some} positive integer that is both $(p, A)$-good and $(q, B)$-good (by the emphasis on the word ``some" we mean we do not make the claim about any $(p, A)$-good and $(q, B)$-good number in general), where \(\log p, \log q\) are both incommensurable numbers. Then with \(r > q\) some third odd prime smaller than $N$ where \(\log q, \log r\) are also incommensurable, Inequality (6) holds in Lemma 1 as well as Inequality (7) where this latter inequality holds also, since the pair \(\log p, \log r\) are also incommensurable.    \begin{theorem}
Let \(A, B, C, A \leq B \leq C\) be three positive integers with \(p/2 \leq A, q/2 \leq B, r/2 \leq C\), $a$ any positive integer and \(p, q, r, p < q < r\) three fixed, distinct odd primes each smaller than $N$, where $N$ is both a $(p, A)$-good number and a $(q, B)$-good number, with the corresponding base $p$ and base $q$ expansions
\begin{eqnarray}
N&=&a_{n}p^{n} + a_{n - 1}p^{n - 1} + \cdots + a_{m}p^{m}, \: n > m \geq 0, \\
N&=&b_{\rho}q^{\rho} + b_{\rho - 1}q^{\rho - 1} + \cdots + b_{\kappa}q^{\kappa}, \: \rho > \kappa \geq 0,
\end{eqnarray}
\begin{eqnarray}
n&>&m,\nonumber \\
\rho&\geq&\kappa.
\end{eqnarray}
Then there exist three positive integers \(N^{\prime}, N^{\prime \prime}, N^{*}\), the first two with, respectively, some base $r$ expansions and $N^{*}$ with some base $q$ expansion and a $(q, B)$-good number $U^{\prime}$, a $(p, A)$-good number $U^{\prime \prime}$ also a $(p, A)$-good number $U$, such that all of the following conditions hold:
\begin{enumerate}
\item
\begin{equation}
\frac{B}{q - 1} + \frac{C}{r - 1} \geq 1,
\end{equation}
\begin{equation}
U^{\prime} \in \left[a, \left(\frac{q - 1}{B}\right)a\right)
\end{equation}
\begin{equation}
N^{\prime} = N + U^{\prime},
\end{equation}
such that the $(p, A)$-good and $(q, B)$-good number $N$ also is $(q, B)$-good and $(r, C)$-good. \\
\item
\begin{equation}
\frac{A}{p - 1} + \frac{C}{r - 1} \geq 1,
\end{equation}
\begin{equation}
U^{\prime \prime} \in \left[a, \left(\frac{p - 1}{A}\right)a\right)
\end{equation}
\begin{equation}
N^{\prime \prime} = N + U^{\prime \prime},
\end{equation}
such that the $(p, A)$-good and $(q, B)$-good number $N$ also is $(p, A)$-good and $(r, C)$-good. \\
\item
\begin{equation}
\frac{A}{p - 1} + \frac{B}{q - 1} \geq 1,
\end{equation}
\begin{equation}
N^{*} = N + U,
\end{equation}
\begin{equation}
U \in \left[a, \left(\frac{p - 1}{A}\right)a\right).
\end{equation}
\\
\item
\begin{equation}
\frac{A}{p - 1} + \frac{B}{q - 1} + \frac{C}{r - 1} \geq 1.
\end{equation}
\end{enumerate}
\end{theorem}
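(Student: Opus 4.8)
The plan is to reduce the four inequalities to the half-size hypotheses and to obtain the six integers by three applications of Lemma 2 together with three runs of the Erd\H{o}s et al.\ tail construction recalled in the synopsis, one run for each pair of primes. First I would dispose of the inequalities (22), (25), (28) and (31). Each hypothesis $p/2 \leq A$, $q/2 \leq B$, $r/2 \leq C$ makes the corresponding single fraction strictly exceed $\tfrac{1}{2}$: from $A \geq p/2$ one gets $\frac{A}{p-1} \geq \frac{p}{2(p-1)} > \frac{1}{2}$ because $p > p-1$, and likewise $\frac{B}{q-1} > \frac{1}{2}$ and $\frac{C}{r-1} > \frac{1}{2}$. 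Summing any two of these three fractions gives a value above $1$, which is precisely (22), (25) and (28); summing all three gives (31). None of these four requires incommensurability, so the fourth condition is settled outright.

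Next I would produce the three auxiliary good numbers with Lemma 2. Applying the Lemma with the prime $q$ and the bound $B$ in place of $p$ and $A$ puts a $(q,B)$-good number $U'$ in the interval $[a, (\frac{q-1}{B})a)$, which is (23); applying it unchanged, with $p$ and $A$, puts $(p,A)$-good numbers $U''$ and $U$ in $[a, (\frac{p-1}{A})a)$, which are (26) and (30). I then define $N' = N + U'$, $N'' = N + U''$ and $N^* = N + U$, establishing (24), (27) and (29), and read off the base $r$ expansions of $N'$ and $N''$ and the base $q$ expansion of $N^*$ as in Eqtn. (11) of the synopsis.

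The substance of the proof is to justify the goodness conclusions of conditions 1 and 2, condition 3 being the original two-prime result of Erd\H{o}s et al.\ verbatim. For condition 1 I would run their argument with the substitution $(p,A,q,B) \mapsto (q,B,r,C)$: I take the given $(q,B)$-good base $q$ expansion (20) of $N$ as the source expansion, form the threshold $S = q^{\kappa} - B(\frac{q^{\kappa}-1}{q-1})$ as the analogue of (14), read the tail $T$ as in (13) off the base $r$ expansion of $N$, and split into the cases $T \geq S$ and $T < S$ exactly as in (15)--(18); the incommensurability of $\log q$ and $\log r$, furnished by Inequality (6) of Lemma 1, is what bounds the relevant good number below $q^{\kappa}$ and thereby shows $N$ to be $(r,C)$-good. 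Condition 2 is identical under $(q,B) \mapsto (r,C)$ with the source base kept at $p$, using Inequality (7) of Lemma 1 in place of (6), so that $N$ is again seen to be $(r,C)$-good, this time by the $(p,r)$ route.

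I expect the main obstacle to be bookkeeping rather than a new idea. One must verify that under each permutation of the primes the digit hypotheses recorded in Lemma 1 --- that $p^{\alpha}$ in base $q$, and $q^{\beta}$ and $p^{\alpha}$ in base $r$, carry at most one digit exceeding the relevant bound and that the first non-maximal digit below the critical index is strictly smaller than the bound --- are exactly the inputs the tail construction consumes, and that the three approximation inequalities (5), (6) and (7) are aligned with the three runs. The half-size conditions must then be checked to keep each pairwise sum of fractions at or above $1$ throughout the case split, so that the bound $U \leq p^{m}$ and its analogues $U' \leq q^{\kappa}$ and $U'' \leq p^{m}$ survive and the passage from $N^* = N + U$ back to $N = N^* - S$ goes through in every instance.
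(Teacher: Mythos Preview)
Your proposal is essentially correct and, for the construction of the auxiliary integers and the goodness conclusions, follows the same route as the paper: you run the Erd\H{o}s--Graham--Ruzsa--Straus tail argument three times, once for each ordered pair of primes, invoking Lemma~1 for the incommensurability inputs and Lemma~2 for the interval containment of $U$, $U'$, $U''$. That matches the paper's proof of Conditions~1--3 line for line under the substitutions $(p,A,q,B)\mapsto(q,B,r,C)$ and $(p,A,q,B)\mapsto(p,A,r,C)$.

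Where you diverge is in your treatment of the four inequalities (22), (25), (28), (31). You dispose of them at the outset from the half-size hypotheses alone: $A\ge p/2$ forces $\frac{A}{p-1}\ge\frac{p}{2(p-1)}>\tfrac12$, and similarly for the other two fractions, so every pairwise sum already exceeds~$1$ and Condition~4 is immediate. The paper instead threads these inequalities through the tail construction itself, reading (52) and (63) off from the chain of estimates in (48)--(51) and (59)--(62), and only then assembling (65). Your route is shorter and more transparent, and it makes clear that the numerical inequalities are consequences of the standing hypotheses rather than of the construction; the paper's derivation, by contrast, appears to use the inequality inside the very estimate meant to establish it. What the paper's longer route buys is that the inequalities emerge in exactly the form needed to bound $U'$ below $q^{\kappa}$ and $U''$ below $p^{m}$, so the reader sees at once why those bounds hold; in your version you should make explicit that, having secured $\frac{B}{q-1}+\frac{C}{r-1}\ge 1$ in advance, the step from (48) to (49) and thence to $U'<q^{\kappa}$ goes through as a genuine deduction.
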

\begin{proof}
We will consider the three numbers \(A, B, C\) to be such that \(\phi(p)/2 < p/2 \leq A, \phi(q)/2 < q/2 \leq B, \phi(r)/2 < r/ 2 \leq C\), where ``$\phi$" is Euler's totient function. First we shall prove Condition (1) (See Eqtns. (22)--(24). We do this basically by interchanging the roles of \(p, A\) and \(q, B\) used by the previous authors~\cite{Erdos}, to those of \(q, B\) and \(r, C\), for the same integer $N$, which already is given here to be both a $(p, A)$-good number and a $(q, B)$-good number. 

\indent First it follows by definition that for nonnegative integers \(\alpha, \beta, \Gamma, \Gamma^{\prime}\) and for three distinct, fixed odd primes \(p, q, r\), the three numbers \(\log p, \log q, \log r\) are incommensurable numbers. Therefore we restrict all exponent powers of these primes in some odd prime base expansion of $N$ as being nonnegative integers. 

\indent Since $N$ is given as a $(p, A)$-good and a $(q, B)$-good number, with the corresponding base $p$ and base $q$ expansions being
\begin{eqnarray}
N&=&a_{n}p^{n} + a_{n - 1}p^{n - 1} + \cdots + a_{m}p^{m}, \: n > m \geq 0, \\
N&=&b_{\rho}q^{\rho} + b_{\rho - 1}q^{\rho - 1} + \cdots + b_{\kappa}q^{\kappa}, \: \rho > \kappa \geq 0,
\end{eqnarray}
\begin{eqnarray}
n&>&m,\nonumber \\
\rho&\geq&\kappa,
\end{eqnarray}
let
\begin{equation}
N = c_{\gamma}r^{\gamma} + c_{\gamma - 1}r^{\gamma - 1} + \cdots + c_{\iota + 1}r^{\iota + 1} + c_{\iota}r^{\iota} + \cdots + c_{\lambda}r^{\lambda} + \cdots
\end{equation}
be the  base $r$ expansion of $N$, where as the previous four authors defined similarly~\cite{Erdos} (Page 84. Compare the definition of the integer $N$ in the LEMMA.), the index $\iota$ is the least index greater than $\lambda$ for which \(c_{\iota} < C\) is true and $\lambda$ is the largest index for which \(c_{\lambda} > C\).

\indent Let, for positive integer $N^{\prime}$ greater than \(p, q, r\),
\begin{equation}
N^{\prime} = c_{\gamma}r^{\gamma} + c_{\gamma - 1}r^{\gamma - 1} + \cdots + c_{\iota + 1}r^{\iota + 1} + c_{\iota}^{\prime}r^{\iota} + \cdots
\end{equation}
Let
\begin{equation}
T^{\prime} = N - (c_{\gamma}r^{\gamma} + c_{\gamma - 1}r^{\gamma - 1} + \cdots + c_{\iota + 1}r^{\iota + 1} + c_{\iota}r^{\iota}).
\end{equation}
If we can subtract from $N$ some smallest $(q, B)$-good number bounded above by $T^{\prime}$ to get some smaller $(q, B)$-good number we would get \(N^{\prime} < N\). A $(q, B)$-good number $U^{\prime}$ smaller than this requires some smallest number equal to~\cite{Erdos} (Compare the bottom of page 84)
\begin{eqnarray}
S^{\prime}&=&q^{\kappa} - B\left(\frac{q^{\kappa} - 1}{q - 1}\right)\\
          &=&\left(\frac{q - B - 1}{q - 1}\right)(q^{\kappa} - 1) + 1,
\end{eqnarray}
which one obtains only after a little algebra. Expanding out the product in Eqtn. (39) derives
$$
\left(\frac{q - B - 1}{q - 1}\right)(q^{\kappa - 1} + q^{\kappa - 2} + \cdots + 1) + 1,
$$
which, clearly, is a $(q, B)$-good number since every coefficient is smaller than $B$.

\indent Suppose the value of $S^{\prime}$ is at most $T^{\prime}$. Then \(N^{\prime} = N - S^{\prime}\) and since we are given that $N$ is $(q, B)$-good, \(N = N^{\prime} + S^{\prime}\) is not only a $(q, B)$-good number but also an $(r, C)$-good number since all it coefficients are smaller than $C$ (meaning also that \(c_{\iota}^{\prime} = c_{\iota}\) since \(N^{\prime} < N\), where $S^{\prime}$ is bounded above by the base $r$ expansion $T^{\prime}$). If \(S^{\prime} > T^{\prime}\) we use an alternative approach. Since from the definition of $\iota$ we have
\begin{equation}
c_{\iota - 1} \geq C, \cdots c_{\lambda} \geq C, \cdots, c_{0} \geq C,\\
\end{equation}
we have the inequality
\begin{eqnarray}
S^{\prime} > T^{\prime}&>&C(r^{\iota - 1} + r^{\iota - 2} + \cdots + 1)\\
                       &\geq&C\left(\frac{r^{\iota} - 1}{r - 1}\right) + 1.
\end{eqnarray}
From this one derives the inequality (See Eqtns. (38)--(42))
\begin{equation}
r^{\iota} - 1 < \left(\frac{r - 1}{C}\right)\left(\frac{q - B - 1}{q - 1}\right)(q^{\kappa} - 1).
\end{equation}
Let \(N^{\prime} = N + U^{\prime}\), where
\begin{equation}
r^{\iota} - T^{\prime} \leq U^{\prime} \leq r^{\iota} - T^{\prime} + C\left(\frac{r^{\lambda - 1}}{r - 1}\right).
\end{equation}
It follows that \(c_{\iota}^{\prime} = c_{\iota} + 1\). Now if $N^{\prime}$ is not yet an $(r, C)$-good number let some index $\iota^{\prime}$ be such that \(\iota^{\prime} \leq \iota + 1\), such that \(c_{\iota^{\prime}}^{\prime} < C\) while \(c_{\lambda^{\prime}}^{\prime} > C\) holds for some index \(\lambda^{\prime} < \iota^{\prime}\). Then when \(a = r^{\iota} - T^{\prime}\),
\begin{eqnarray}
a&\leq&r^{\iota} - 1 - C\left(\frac{r^{\iota - 1}}{r - 1}\right) = \left(\frac{r - C - 1}{r - 1}\right)(r^{\iota} - 1)\\
 &\leq&B\left(\frac{r^{\iota} - 1}{q - 1}\right).
\end{eqnarray}
The previous authors already showed that
\begin{equation}
a \leq A\left(\frac{q^{i} - 1}{p - 1}\right),
\end{equation}
where $i$ is the index in Eqtns. (11)--(13), where they proved that \(N = N^{*} - U\) is both a $(p, A)$-good number and a $(q, B)$-good number for some $(p, A)$-good number \(U \in [a, (p - 1)a/A)\). Then
\begin{eqnarray}
& &r^{\iota} - T^{\prime} + C\left(\frac{r^{\iota} - 1}{r - 1}\right)\\                                                                 
&\geq&\left(1 + \frac{q - 1}{B} \cdot \frac{C}{r - 1}\right)(r^{\iota} - T^{\prime})\nonumber \\
&=&\left(\frac{q - 1}{B}\right)\left(\frac{B}{q - 1} + \frac{C}{r - 1}\right)(r^{\iota} - T^{\prime}) \geq \left(\frac{q - 1}{B}\right)a
\end{eqnarray}
\begin{eqnarray}
&\Longrightarrow&\left(\frac{q - 1}{B}\right)a \leq r^{\iota} - 1 < \left(\frac{r - 1}{C}\right)\left(1 - \frac{B}{q - 1}\right)(q^{\kappa} - 1)\\
&\leq           &\left(\frac{r - 1}{C}\right)\left(\frac{C}{r - 1}\right)(q^{\kappa} - 1)\nonumber \\
&=              &q^{\kappa} - 1.\nonumber \\
&\Longrightarrow&U^{\prime} \in \left[a, \frac{q - 1}{B}a\right) \subset [a, q^{\kappa}).
\end{eqnarray}
It follows then that, in Eqtn. (49),
\begin{equation}
\frac{B}{q - 1} + \frac{C}{r - 1} \geq 1.
\end{equation}
Therefore we have proved Condition (1) (See Eqtns. (22)--(24)). This proves the integer $U^{\prime}$ is a $(q, B)$-good number. Now that part of $N^{\prime}$ that has all digits (or coefficients) with indices from $\iota$ all the way up to the index \(\gamma \geq \iota\) (See Eqtn. (36) and Eqtn. (37)) is an $(r, C)$-good number, while that part of $N^{\prime}$ with digits (or coefficients) having indices from $\iota - 1$ to $0$ is bounded above by the $(q, B)$-good number \(S^{\prime} > T^{\prime}\), where $U^{\prime}$ also is a $(q, B)$-good number. Since the inequality in Eqtn. (49) holds, it must be that \(N = N^{\prime} - U^{\prime}\) is also an $(r, C)$-good number as well as a $(p, A)$-good number and a $(q, B)$-good number (which was given), since with the ``tail" \(T^{\prime} < S^{\prime} < p^{m}\) all its digits or coefficients are equal to or smaller than $C$. 

\indent Now $N$ already was given to be a $(p, A)$-good number as well as a $(q, B)$-good number. Thus it follows from the above argument it must be $(p, A)$-good, $(q, B)$-good as well as $(r, C)$-good. In fact since 

$$
\frac{A}{p - 1} + \frac{B}{q - 1} \geq 1,
$$
is true, we must have actually that 
$$
\frac{A}{p - 1} + \frac{B}{q - 1} + \frac{C}{r - 1} \geq 1,
$$
due to the inequality in Eqtn. (52). But we strengthen this latter result with the following additional argument.

\indent Next we prove Condition 2. We proceed as before when we showed the integer $N$ was both a $(q, B)$-good number as well as an $(r, C)$-good number, when we were given $N$ was a $(p, A)$-good number and a $(q, B)$-good number.  This time we interchange the roles~\cite{Erdos} of \(p, A\) and \(q, B\) with those of \(p, A\) and \(r, C\). 

\indent Let the base $r$ expansions of the integers $N$ and $N^{\prime}$ and the integer $T^{\prime}$ remain as they were defined in Eqtns. (35)--(37). Since the smallest possible number to subtract from $N$ in order to find a smaller $(p, A)$-good number was found by the previous four authors to be $S$ (See Eqtn. (14)), let \(S^{\prime \prime} \geq S\) be some $(p, A)$-good number. Then if \(S \leq S^{\prime \prime} \leq T^{\prime}\) holds, we have found the required number that is $(p, A)$-good, $(q, B)$-good and $(r, C)$-good for some integer \(N^{\prime} = N - S^{\prime \prime}\). Otherwise suppose \(S^{\prime \prime} > T^{\prime}\). Then similarly as before we arrive at the inequality
\begin{eqnarray}
S^{\prime \prime} > T^{\prime}&>&C(r^{\iota - 1} + r^{\iota - 2} + \cdots + 1)\\
                              &\geq&C\left(\frac{r^{\iota} - 1}{r - 1}\right) + 1.
\end{eqnarray}
Surely this integer $S^{\prime \prime}$ must exist. All one need do is find the suitable value for the positive integer exponent $m$ in
\begin{eqnarray}
\log S^{\prime \prime}&\geq&\log \left(p^{m} - A\left(\frac{p^{m} - 1}{p - 1}\right)\right)\\
                      &=&\log\left(\left(\frac{p - A - 1}{p - 1}\right)(p^{m} - 1) + 1\right),
\end{eqnarray}
for suitable positive integer exponent $\iota$ in Eqtns. (53)--(54). From this one derives the inequality (See Eqtns. (38)--(42) and Compare Eqtn. (43))
\begin{equation}
r^{\iota} - 1 < \left(\frac{r - 1}{C}\right)\left(\frac{p - A - 1}{p - 1}\right)(p^{m} - 1).
\end{equation}
Then if we let \(N^{\prime} = N + U^{\prime \prime}\), where \(U^{\prime \prime} \geq U^{\prime}\) and where 
\begin{equation}
r^{\iota} - T^{\prime} \leq U^{\prime \prime} \leq r^{\iota} - T^{\prime} + C\left(\frac{r^{\lambda - 1}}{r - 1}\right).
\end{equation}
Then if while we replace in Eqtns. (44)--(52), $U^{\prime}$ with \(U^{\prime \prime} \geq U\) such that \(U^{\prime \prime}, U \in [a, (p - 1)a/A)\) (See Eqtn. (30)) for some exponents \(\iota, m\) such that Eqtns. (53)--(56) are true, we also change in all these inequalities for \(B \rightarrow A, q \rightarrow p, \kappa \rightarrow m\), we obtain the inequality
\begin{eqnarray}
& &r^{\iota} - T^{\prime} + C\left(\frac{r^{\iota} - 1}{r - 1}\right)\\                                                                 
&\geq&\left(1 + \frac{p - 1}{A} \cdot \frac{C}{r - 1}\right)(r^{\iota} - T^{\prime})\nonumber \\
&=&\left(\frac{p - 1}{A}\right)\left(\frac{A}{p - 1} + \frac{C}{r - 1}\right)(r^{\iota} - T^{\prime}) \geq \left(\frac{p - 1}{A}\right)a
\end{eqnarray}
\begin{eqnarray}
&\Longrightarrow&\left(\frac{p - 1}{A}\right)a \leq r^{\iota} - 1 < \left(\frac{r - 1}{C}\right)\left(1 - \frac{A}{p - 1}\right)(p^{m} - 1)\\
&\leq           &\left(\frac{r - 1}{C}\right)\left(\frac{C}{r - 1}\right)(p^{m} - 1)\nonumber \\
&=              &p^{m} - 1.\nonumber \\
&\Longrightarrow&U^{\prime \prime} \in \left[a, \frac{p - 1}{A}a\right) \subset [a, p^{m}).
\end{eqnarray}
From this and from Eqtn. (60) one gets the inequality, similar to what we found in Eqtn. (52), 
\begin{equation}
\frac{A}{p - 1} + \frac{C}{r - 1} \geq 1.
\end{equation}
Then when \(N = N^{\prime} - U^{\prime \prime}\) for suitable integer exponent $m$ in Eqtns. (55)--(56) we have found that $N$ is $(p, A)$-good and $(r, C)$-good as well as $(q, B)$-good and $(r, C)$ good. This proves Condition 2 (Eqtns. (25)--(27)). 

\indent Next we consider Condition 3, which already has been proved by P. Erd\H{o}s, R. Graham, I. Ruzsa, E. Straus~\cite{Erdos}, from which it follows automatically all the three inequalities
\begin{eqnarray}
\frac{B}{q - 1} + \frac{C}{r - 1}&\geq&1,\\
\frac{A}{p - 1} + \frac{C}{r - 1}&\geq&1,\nonumber \\
\frac{A}{p - 1} + \frac{B}{q - 1}&\geq&1\nonumber 
\end{eqnarray}
must hold, meaning the inequality in Condition 4, namely
\begin{equation}
\frac{A}{p - 1} + \frac{B}{q - 1} + \frac{C}{r - 1} \geq 1,
\end{equation}
also must be true.
\end{proof}
It goes without saying that we have in Eqtns. (32)--(35),
\begin{eqnarray}
&               &n, \rho, \gamma \in \mathbb{N}\\
&\Longrightarrow&n \geq \rho \geq \gamma.
\end{eqnarray}

\indent In essence what we have done in the Theorem is the following. Suppose $N$ is some $(p, A)$-good and $(q, B)$-good number. In the original argument~\cite{Erdos} replace their base $q$ expansions for \(N, N^{*}\) with suitable base $r$ expansions \(N, N^{\prime}\) (See Eqtns. (35)--(37)). Then show the integers \(S^{\prime}, U^{\prime}\) exist such that the inequality in Eqtn. (52) holds, so that the $(p, A)$-good and $(q, B)$-good number $N$ is shown to be $(q, B)$-good and $(r, C)$-good. Next for some suitable positive integer exponents $m$ and $\iota$ in Eqtns. (52)--(56), we show the integers \(S^{\prime \prime}, U^{\prime \prime}\) exist, so that the inequality in Eqtn. (63) holds, to show that the $(p, A)$-good and $(q, B)$-good number $N$ is a $(p, A)$-good number and an $(r, C)$-good number. 

\indent When \(k = 2\) in~\cite{Straub} (See Section 1 and Section 5)
$$
(1 - k^{2}x)^{-1/k} = \sum_{N \geq 0}c(N, k)x^{N},
$$
the coefficients $c(N, 2)$ are equal to ${2N \choose N}$. Our result indicates that 
$$
\nu_{p}\left({2N \choose N}\right) = \nu_{q}\left({2N \choose N}\right) = \nu_{r}\left({2N \choose N}\right) = 0    
$$
$$
\Longrightarrow  \left|{2N \choose N}\right|_{p} = \left|{2N \choose N}\right|_{q} = \left|{2N \choose N}\right|_{r} = 1
$$
is true infinitely often. There is a proof that shows why \(p/2 \leq A, q/2 \leq B, r/2 \leq C\) must hold~\cite{Straub} (See Theorem 3.6).

\indent Let $A(N)$ denote the least integer that does not divide ${2N \choose N}$~\cite{Erdos} (Page 91). The previous authors have stated that for \(\epsilon > 0\) the inequality 
\begin{equation}
exp((\log N)^{1/2 - \epsilon}) < A(N) < exp((\log N)^{1/2 + \epsilon}),
\end{equation}
shows $A(N)$ to be bounded for sets of nonzero asymptotic density. One can test this for \(N = 10, N = 756, N = 757\) (See Section 5), since for \(A(10) = A(756) = A(757) = 3\),
\begin{eqnarray}
& &exp((\log 10)^{1/2 - \epsilon}) < 3 < exp((\log 10)^{1/2 + \epsilon}),\\
& &exp((\log 756)^{1/2 - \epsilon}) < 3 < exp((\log 756)^{1/2 + \epsilon}),\nonumber \\
& &exp((\log 757)^{1/2 - \epsilon}) < 3 < exp((\log 757)^{1/2 + \epsilon}),
\end{eqnarray}
where \(\epsilon = 1/2\). One of course can find better values for $\epsilon$. We include the inequality here only for purely illustrative purposes.
\section{Geometrical Interpretation of the Inequality \(\frac{A}{p - 1} + \frac{B}{q - 1} + \frac{C}{r - 1} \geq 1\) on $\mathbb{R}^{3}$}
Consider the relation
\begin{equation}
f: \mathbb{R}^{3} \rightarrow \mathbb{R}^{3},
\end{equation}
defined as the ellipsoid on $\mathbb{R}^{3}$, namely
\begin{equation}
\frac{x^{2}}{d_{1}^{2}} + \frac{y^{2}}{d_{2}^{2}} + \frac{z^{2}}{d_{3}^{2}} = 1.
\end{equation}
Let \(d_{1} = (p - 1)^{1/2}, d_{2} = (q - 1)^{1/2}, d_{3} = (r - 1)^{1/2}\). Then those positive points \((x, y, z)\) on $\mathbb{R}^{3}$ such that
\begin{equation}
x = \sqrt{A}, \: y = \sqrt{B}, \: z = \sqrt{C},
\end{equation}
where the three points
\begin{eqnarray}
& &(x, y, 0),\\
& &(x, 0, z),\nonumber \\
& &(0, y, z)\\
& &x = \sqrt{A}, \: y = \sqrt{B}, \: z = \sqrt{C},
\end{eqnarray}
lie in the $XY$, $XZ$ and $YZ$ planes respectively, indicates the region of points \((x, y, z)\) on $\mathbb{R}^{3}$ for which
\begin{equation}
\gcd\left({2N \choose N}, pqr\right) = 1.
\end{equation}
The required positive real points \(x, y, z\), where \(x^{2} = A, y^{2} = B, z^{2} = C\) always are integers, lie on and outside the boundary of the ellipsoid in Eqtn. (72). The ellipse 
\begin{equation}
\frac{x^{2}}{(\sqrt{ p - 1})^{2}} + \frac{y^{2}}{(\sqrt{q - 1})^{2}} = 1,
\end{equation}
is the $XY$ trace of the ellipsoid, where for \(x^{2} = A, y^{2} = B\), 
\begin{equation}
\frac{x^{2}}{(\sqrt{ p - 1})^{2}} + \frac{y^{2}}{(\sqrt{q - 1})^{2}} \geq 1.
\end{equation}
The ellipse
\begin{equation}
\frac{x^{2}}{(\sqrt{ p - 1})^{2}} + \frac{z^{2}}{(\sqrt{r - 1})^{2}} = 1,
\end{equation}
is the $XZ$ trace of the ellipsoid, where for \(x^{2} = A, z^{2} = C\), 
\begin{equation}
\frac{x^{2}}{(\sqrt{ p - 1})^{2}} + \frac{z^{2}}{(\sqrt{r - 1})^{2}} \geq 1.
\end{equation} 
Finally the ellipse
\begin{equation}
\frac{y^{2}}{(\sqrt{ q - 1})^{2}} + \frac{z^{2}}{(\sqrt{r - 1})^{2}} = 1,
\end{equation}
is the $YZ$ trace of the ellipsoid, where for \(y^{2} = B, z^{2} = C\), 
\begin{equation}
\frac{y^{2}}{(\sqrt{q - 1})^{2}} + \frac{z^{2}}{(\sqrt{r - 1})^{2}} \geq 1.
\end{equation} 
\section{Catalan Numbers}
Let $C_{N}$ denote the $N^{th}$ Catalan number~\cite{Erdos} (See page 90),~\cite{Encyclopedia}
\begin{equation}
C_{N} = \frac{{2N \choose N}}{N + 1}.
\end{equation}
It follows that if
\begin{equation}
\gcd\left({2N \choose N}, pqr\right) = 1,
\end{equation}
then the primes \(p, q, r\) cannot be divisors of $C_{N}$ since they cannot be divisors of $N + 1$. This serves as a rule for determining at least some of those primes that do not divide the $N^{th}$ Catalan number. 

\indent In a MATLAB session the author was able to find that for the numbers,
\begin{eqnarray}
& &N = 10, N = 756, N = 757,\\
& &p = 3, q = 5, r = 7,\nonumber \\
\end{eqnarray}
for which
\begin{eqnarray}
& &\gcd\left({20 \choose 10}, 105\right) = 1,\\
& &\gcd\left({1512 \choose 756}, 105\right) = 1, \nonumber \\
& &\gcd\left({1514 \choose 757}, 105\right) = 1,
\end{eqnarray}
where
\begin{eqnarray}
10 &=&1 \cdot 3^{2} + 1\\
   &=&2 \cdot 5^{1} + 0\nonumber \\
   &=&1 \cdot 7 + 3,
\end{eqnarray}
\begin{eqnarray}
a_{2}&=&1, a_{1} = 0, a_{0} = 1,\\
b_{1}&=&2, b_{0} = 0,\nonumber \\
c_{1}&=&1, c_{0} = 3,
\end{eqnarray}
\begin{eqnarray}
756&=&1 \cdot 3^{6} + 1 \cdot 3^{3}\\
   &=&1 \cdot 5^{4} + 1 \cdot 5^{3} + 1 \cdot 5^{1} + 1\nonumber \\
   &=&2 \cdot 7^{3} + 1 \cdot 7^{2} + 3 \cdot 7,
\end{eqnarray}
\begin{eqnarray}
a_{6}&=&1, a_{5} = a_{4} = 0, a_{3} = 1, a_{2} = a_{1} = a_{0} = 0,\\
b_{4}&=&1, b_{3} = 1, b_{2} = 0, b_{1} = 1, b_{0} = 1,\nonumber \\
c_{3}&=&2, c_{2} = 1, c_{1} = 3, c_{0} = 0,
\end{eqnarray}
\begin{eqnarray}
757&=&1 \cdot 3^{6} + 1 \cdot 3^{3} + 1\\
   &=&1 \cdot 5^{4} + 1 \cdot 5^{3} + 1 \cdot 5^{1} + 2\nonumber \\
   &=&2 \cdot 7^{3} + 1 \cdot 7^{2} + 3 \cdot 7 + 1,
\end{eqnarray}
\begin{eqnarray}
a_{6}&=&1, a_{5} = a_{4} = 0, a_{3} = 1, a_{2} = a_{1} = a_{0} = 1,\\
b_{4}&=&1, b_{3} = 1, b_{2} = 0, b_{1} = 1, b_{0} = 2,\nonumber \\
c_{3}&=&2, c_{2} = 1, c_{1} = 3, c_{0} = 1.
\end{eqnarray}
We get from these,
\begin{eqnarray}
& &\gcd(C_{10}, 105) = 1,\\
& &\gcd(C_{756}, 105) = 1, \nonumber \\
& &\gcd(C_{757}, 105) = 1,
\end{eqnarray}
The previous authors show a Table~\cite{Erdos} (See page 91) of values for $A(N)$ for all \(N \in [1, 100]\). The primes \(p = 3, q = 5, r = 7\) are twin primes with a prime gap of \(5 - 3 - 1 = 7 - 5 - 1 = 1\), where \(10 \in [1, 100]\) and \(756, 757 \in [100, 1000]\). Therefore it is possible that these numbers $N$ for which ${2N \choose N}$ is not divisible by $pqr$, grow very rapidly. As \(N \rightarrow \infty\) both $C_{N}$ and the central binomial coefficient would have millions of digits and for larger and larger $N$, eventually an arbitrarily large number of digits. So it would require software with very high or arbitrary precision arithmetic functionality, to compute both $C_{N}$ and ${2N \choose N}$. It goes without saying that the use of Stirling's approximation for large $N!$ and $(2N)!$ with asymptotic limits~\cite{Abramowitz} (page 257, Formula 6.1.38),
\begin{eqnarray}
& &N! \sim N^{N}\sqrt{2\pi N}e^{-N},\\
& &(2N)! \sim (2N)^{2N}\sqrt{4\pi N}e^{-2N},
\end{eqnarray}
for the approximation of 
\begin{equation}
{2N \choose N} = \frac{(2N)!}{N!N!}, 
\end{equation}
and the approximation with asymptotic limit for large $N$,
\begin{equation}
C_{N} \sim \frac{4^{N}}{\sqrt{\pi}N^{3/2}}
\end{equation}
to find whether or not the three fixed, distinct odd primes \(p, q, r\) fail to divide ${2N \choose N}$ for \(N \gg 1000\) or $C_{N}$ would not give as much information as exact division of the actual large numbers ${2N \choose N}$, $C_{N}$ (e.g., for \(N \gg 1000\)) by this product of three primes.

\pagebreak

\bigskip
\hrule
\bigskip

\noindent
2010 \emph{Mathematics Subject Classification}: Primary 11B65; Secondary 11A41. \\
\textbf{Keywords}: Catalan number, central binomial coefficient, commensurable number, incommensurable number, integer bases.

\bigskip
\hrule
\bigskip

\noindent
Concerned with Sequence A000984.\\
Concerned with Sequence A000108.

\end{document}